\mathchardef\mh="2D
\renewcommand{\phi}{\varphi}
\theoremstyle{definition}
\newtheorem{definition}{Definition}[section]
\newtheorem{axioms}{Axiom Set}
\renewcommand{\theaxioms}{\Alph{axioms}}
\theoremstyle{remark}
\newtheorem{remark}[definition]{Remark}
\theoremstyle{plain}
\newtheorem{lemma}[definition]{Lemma}
\newtheorem{theorem}[definition]{Theorem}
\newtheorem{proposition}[definition]{Proposition}
\newtheorem*{proposition*}{Proposition}
\newtheorem{corollary}[definition]{Corollary}
\newtheorem{map}[definition]{Map}
\crefname{axiom}{Axiom}{Axioms}
\newlist{axenum}{enumerate}{2}
\setlist[axenum,1]{label=\theaxioms\arabic*., ref=\theaxioms\arabic*}
\setlist[axenum,2]{label=(\alph*), ref=\theaxioms\theenumi(\alph*)}
\newcommand{\eqdef}{\coloneqq}
\newcommand{\II}{\mathbb{I}}
\newcommand{\CC}{\ensuremath{\mathcal{C}}}
\newcommand{\GD}{\ensuremath{\Delta}}
\newcommand{\GF}{\ensuremath{\Phi}}
\newcommand{\GG}{\ensuremath{\Gamma}}
\newcommand{\GL}{\ensuremath{\Lambda}}
\newcommand{\GP}{\ensuremath{\Pi}}
\newcommand{\GS}{\ensuremath{\Sigma}}
\newcommand{\id}{\mathrm{id}}
\newcommand{\Id}{\mathrm{Id}}
\newcommand{\dom}{\mathbf{dom}}
\newcommand{\cod}{\mathbf{cod}}
\newcommand{\conc}{\mathbf{conc}}
\newcommand{\diag}[1]{\GD_{#1}}
\newcommand{\pair}[2]{\langle{#1},{#2}\rangle}
\newcommand{\pullback}{\ar[phantom]{dr}[very near start]{\lrcorner}}
\newcommand{\Map}[1]{{#1}\mh\mathrm{\mathbf{map}}}
\newcommand{\DefRet}[1]{\mathrm{\mathbf{DefRet}}(#1)}
\NewDocumentCommand\Cocyl{o m}{\mathbb{I} \mathbin{\oslash\IfValueT{#1}{_{#1}}} {#2}}
\NewDocumentCommand\face{o m m}{\delta^{#2} \mathbin{\oslash\IfValueT{#1}{_{#1}}} {#3}}
\NewDocumentCommand\degen{o m}{\varepsilon \mathbin{\oslash\IfValueT{#1}{_{#1}}} {#2}}
\NewDocumentCommand\bdy{o m}{\partial \mathbin{\oslash\IfValueT{#1}{_{#1}}} {#2}}
\NewDocumentCommand\lface{o m m}{\delta^{#2} \mathbin{\widehat\oslash\IfValueT{#1}{_{#1}}} {#3}}
\NewDocumentCommand\lbdy{o m}{\partial \mathbin{\widehat\oslash\IfValueT{#1}{_{#1}}} {#2}}
\title{\texorpdfstring{Stable factorization \\ from a fibred algebraic weak factorization system}{Stable factorization from a fibred algebraic weak factorization system}}
\author{\texorpdfstring{Evan Cavallo\\Carnegie Mellon University\\\href{mailto:ecavallo@cs.cmu.edu}{\texttt{ecavallo@cs.cmu.edu}}}{Evan Cavallo}}
\date{October 2019}
\begin{document}

\maketitle

\begin{abstract}
  We present a construction of stable diagonal factorizations, used to define categorical models of type
  theory with identity types, from a family of algebraic weak factorization systems on the slices of a
  category. Inspired by a computational interpretation of indexed inductive types in cubical type theory due
  to Cavallo and Harper, it can be read as a refactoring of a construction of van den Berg and Garner, and is
  a new alternative among a variety of approaches to modeling identity types in the literature.
\end{abstract}

The connection between weak factorization systems (wfs's) and identity types, observed notably by Awodey and
Warren \cite{awodey09} and Gambino and Garner \cite{gambino08}, lies at the heart of homotopical
interpretations of type theory. However, as Awodey and Warren recognize, a weak factorization system alone is
not sufficient to interpret Martin-L\"of's intensional identity types in the standard sense
\cite{martin-lof75}, because neither the factorization of maps nor the lifts of left against right maps are
sufficiently structured to model coherence under substitution. Rather, one needs \emph{choices} of
factorizations and lifts \emph{in every context, coherently with respect to reindexing}.

In order to deal with this problem, van den Berg and Garner \cite{van-den-berg12} introduced the notion of a
\emph{cloven wfs} (\emph{cwfs}) on a category, slightly weaker than but similar to the more widely-used
\emph{algebraic wfs} \cite{grandis06,garner07}. A cloven wfs provides the choices of factorizations and lifts
mentioned above in an empty context. However, additional work is required to obtain factorizations and lifts
in an arbitrary context in a coherent way. Van den Berg and Garner thus require an even stronger assumption,
that of a \emph{path object category} \cite[Axioms 1-3]{van-den-berg12}, which induces a cloven wfs but also
satisfies the crucial property that its slices coherently inherit path object category structures. This is
enough to derive a \emph{stable functorial choice of diagonal factorizations} \cite[Definition
3.3.3]{van-den-berg12}, which is more-or-less exactly what is needed to model identity types as part of a
model of Martin-L\"of's intensional type theory.

Unfortunately, the axioms that a path object category $\CC$ must satisfy are somewhat onerous to
check. Standard ``objects of paths'' in typical target categories---such as the exponential $X^{[0,1]}$ of a
topological space $X$, or $A^{\Delta^1}$ of a simplicial set $A$---are \emph{not} path objects in this sense,
because they satisfy only up to homotopy certain laws that are required to hold strictly in a path object
category. (For example, path composition must be strictly associative.) Instead, van den Berg and Garner are
forced to rely on more complex objects, such as Moore paths and their simplicial analogues. Showing that these
objects satisfy the necessary axioms is non-trivial \cite[\S7]{van-den-berg12}.

In this note, we approach the problem from a different angle. Rather than looking for a structure on $\CC$
that induces a cwfs and can be reindexed---thus induces a cwfs on every slice---we instead take such a family
of factorization systems on the slices of $\CC$ as \emph{input}. Specifically, we use Swan's notion of
\emph{fibred algebraic weak factorization system} \cite{swan18b}. We will still need some assumptions to get
from here to a stable functorial choice of diagonal factorizations, namely a functorial cocylinder satisfying
some properties. However, these are fairly permissive; for one, we can use $(-)^{\Delta^1}$ as our cocylinder
in simplicial sets without issue.

We then rely on Swan's general techniques for constructing fibred factorization systems \cite{swan18d}, which
apply in such cases as simplicial and structural cubical sets. Intuitively, the ``Moore-like'' aspect of van
den Berg and Garner's construction is encapsulated in this step, the factorizations in each slice typically
being constructed by an inductive process (i.e., a small object argument). Our contribution, then, is to
observe that we can leverage a technique already being used in the construction of awfs's, rather than
introducing separately the machinery of Moore paths.

We introduce the basic notions in \cref{sec:preliminaries}, give the main construction in
\cref{sec:construction}, and instantiate it with cubical and simplicial sets in \cref{sec:examples}.

\paragraph{Related work}

We have already discussed the connection with the work of van den Berg and Garner. Similar ideas were already
employed by Warren \cite[Chapter 3]{warren08}, as briefly summarized in \cite[\S4.2]{awodey09}. Warren uses
categories with a certain variety of interval (namely, a \emph{cocategory object} satisfying further
conditions) to interpret Martin-L\"of identity types. Like the path objects of van den Berg and Garner, these
must satisfy conditions such as (co)associativity on the nose. Any such interval induces a path object
category structure; thus van den Berg and Garner's results generalize those of Warren
\cite[\S5.4]{van-den-berg12}. Warren's models invalidate the uniqueness of identity proofs but are
1-truncated---any pair of identities between identities are identified. (Warren separately constructs a model
in strict $\omega$-groupoids that is not $n$-truncated for any $n \in \mathbb{N}$ \cite[Chapter 4]{warren08}.)

Swan has also presented two general constructions of a stable functorial choice of diagonal factorizations
\cite{swan18a}. The first takes a \emph{algebraic model structure (ams) with structured weak equivalences} as
input \cite[Theorem 4.13]{swan18a}; while the construction itself is appealingly direct, its hypotheses are
difficult to verify. For this reason, Swan also presents a more specialized construction that requires fewer
assumptions to apply \cite[\S5]{swan18a}, taking a \emph{pre-ams} satisfying axioms based on those used by
Gambino and Sattler \cite{gambino17}. By imposing further conditions, further simplifications are possible
\cites[\S9.1]{cchm}[\S6]{swan18a}[\S2.16]{abcfhl}.

Swan's second construction bears a strong resemblance to our own. Indeed, in many ways the two seem to be
``dual''. Swan uses cofibration-trivial fibration factorization to construct the identity types, where we use
trivial cofibration-fibration factorization; his proof of stability uses pullback-stability of the former
factorization system, while we achieve stability by assuming that the latter is fibred.%
\footnote{%
  We are able to completely avoid discussing cofibrations or trivial fibrations in our own construction,
  restricting attention to a single factorization system. However, we will in fact have a model structure in
  the examples we consider, and our \cref{axiom:awfs:lface} would be more cleanly stated as a condition on the
  other factorization system.
} Both constructions pass through a proof that the reflexivity map $X \to \mathrm{Id}_{\GG}(f)$ is a
deformation retract: Swan uses this to show that the map is a trivial cofibration, where we use it to show
that the map $\mathrm{Id}_{\GG}(f) \to Y \times_\GG Y$ is a fibration. (In \cite{van-den-berg12}, the trivial
cofibrations are \emph{defined} to be the strong deformation retracts.) We leave the teasing out of this
apparent ``duality'' for future work. Because we rely on a fibred factorization system, however, we do exclude
some interesting cases handled by Swan, in particular BCH cubical sets \cite{bch}. (See \cite[\S9.3]{swan18b}
for discussion of this case.)

Other approaches to modeling identity types make use of \emph{regular fibrations} (also called \emph{normal
  fibrations}). The idea is to set up the factorization system so that identity types can be modeled simply by
exponentiation by an interval object. Awodey defines an awfs on cartesian cubical sets such that the
factorization of the diagonal $\Delta : A \to A \times A$ is given by $A \to A^{\mathbb{I}} \to A \times A$
\cite{awodey18}. Gambino and Larrea give a definition of normal fibration in any topos equipped with certain
structure (such as simplicial sets or De Morgan cubical sets). Their definition is at least compatible with
$\Sigma$, $\Pi$, and (of course) identity types; they leave univalent universes and inductive types for future
work \cite{gambino19a}. One weakness of normal fibrations is that they apparently interfere with the
constructive modeling of univalent universes; Swan has demonstrated impossibility results to this effect for a
certain class of models \cite{swan18c}. (On the other hand, Gambino and Henry have made recent progress
towards a constructive simplicial model which interprets identity types by an interval exponential
\cite{gambino19b}.)

Finally, one may rely on general coherence results. Voevodsky's simplicial model of univalent type theory, for
example, uses the exponential $(-)^{\Delta^1}$ \cite[Proposition 2.2.3]{kapulkin12} to obtain weakly stable
identity types (among weakly stable versions of the other type formers) and then applies a coherence theorem,
which constructs a new category from the category of simplicial sets in which the weakly stable structures
become strictly stable. This is effective, but it is a big hammer; approaches such as our own are more
targeted.

This construction was inspired by and adapted from the implementation of identity types (and indexed inductive
types more generally) that the author developed with Robert Harper for a computational cartesian cubical type
theory \cite{cavallo19b}. This note isolates the ideas needed to handle identity types; we expect that
modeling higher inductive types with parameters, even simple ones, will require further assumptions.

\paragraph{Acknowledgments}

First and foremost I am grateful to Robert Harper, without whom this note could not exist. Andrew Swan
provided feedback on multiple drafts and helped me to grasp both his own constructions and others in the
literature. I thank Emily Riehl and Christian Sattler for their useful advice, and Mathieu Anel and Steve
Awodey (among others) for teaching me the basics of model category theory over the years.

I gratefully acknowledge the support of the Air Force Office of Scientific Research through MURI grant
FA9550-15-1-0053. Any opinions, findings and conclusions or recommendations expressed in this material are
those of the author and do not necessarily reflect the views of the AFOSR.

\section{Preliminaries}
\label{sec:preliminaries}

\subsection{Fibred factorization systems}

We briefly recall the concept of algebraic weak factorization system introduced by Grandis and Tholen
\cite{grandis06} under the name \emph{natural weak factorization system}. The distributive law requirement in
\cref{def:awfs} was added later on by Garner \cite[Definition 1]{garner07}; we will not use it explicitly
here, so will neglect to explain what it means.

\begin{definition}
  Write $\conc : \CC^\to \times_\CC \CC^\to \to \CC^\to$ for the functor that takes a composable pair of
  arrows in a category $\CC$ and performs the composition.  A \emph{functorial factorization} on a category
  $\CC$ is a section $\pair{L}{R} : \CC^\to \to \CC^\to \times_\CC \CC^\to$ of $\conc$. We will use the letter
  $K$ to denote the functor $\cod \circ L = \dom \circ R : \CC^\to \to \CC$.
\end{definition}

\begin{definition}
  \label{def:awfs}
  An \emph{algebraic weak factorization system} (\emph{awfs}) on a category $\CC$ consists of a comonad
  $(L,\GF,\GS)$ and monad $(R,\GL,\GP)$ on $\CC^\to$ such that
  \begin{enumerate}
  \item $\pair{L}{R}$ is a functorial factorization,
  \item $\GF_f = (\id,R(f)) : L(f) \to f$ and $\GL_f = (L(f),\id) : f \to R(f)$,
  \item the map $(\GS,\GP) : LR \Rightarrow RL$ is a distributive law.
  \end{enumerate}
\end{definition}

\begin{definition}
  Given an awfs $(L,R)$, we define $\Map{L}$ to be the category of pointed endofunctor coalgebras for
  $(L,\GF)$ and $\Map{R}$ to be the category of pointed endofunctor algebras for $(R,\GL)$.
\end{definition}

Concretely, an object in $\Map{L}$ is a pair $(m,s)$ where $m : A \to B$ and $s : B \to K(m)$ is a diagonal
filler for the square $(L(m),\id) : m \to R(m)$, while an arrow $(m',s') \to (m,s)$ in $\Map{L}$ is a square
$(h,k) : m' \to m$ such that $K(h,k)s' = sk$. Dually, an object in $\Map{R}$ is a pair $(f,p)$ where
$f : X \to Y$ and $p : K(f) \to X$ is a filler for $(\id,R(f)) : L(f) \to f$, and an arrow $(f',p') \to (f,p)$
is a square $(h,k) : f' \to f$ such that $pK(h,k) = hp'$.

\begin{figure}[h!]
  \centering
  \begin{subfigure}[b]{0.3\linewidth}
  \[
    \begin{tikzcd}
      A \ar{d}[left]{m} \ar{r}{L(m)} & K(m) \ar{d}{R(m)} \\
      B \ar{r}[below]{\id} \ar[dashed]{ur}{s} & B
    \end{tikzcd}
  \]
  \caption{an $L$-map structure on $m$}
  \end{subfigure}
  \begin{subfigure}[b]{0.3\linewidth}
  \[
    \begin{tikzcd}
      X \ar{d}[left]{L(f)} \ar{r}{\id} & X \ar{d}{f} \\
      K(f) \ar{r}[below]{R(f)} \ar[dashed]{ur}{p} & Y
    \end{tikzcd}
  \]
  \caption{an $R$-map structure on $f$}
  \end{subfigure}
\end{figure}

Intuitively, the maps $s$ and $p$ shown above provide universal solutions to filling problems against $R$- and
$L$-maps respectively. We will use a few standard facts about algebraic weak factorization systems without
proof: the maps returned by $L$ and $R$ are $L$- and $R$-maps respectively, $L$-maps lift functorially against
$R$-maps, and $R$-maps are functorially closed under composition and pullback. (We refer to, e.g., Riehl
\cite{riehl11} for a proper introduction to awfs's.)

Our goal is to take a category $\CC$ equipped with an awfs and construct a \emph{stable functorial choice of
  diagonal factorizations} \cite[Definition 3.3.3]{van-den-berg12}. Under further conditions on
$\CC$---notably, the Frobenius condition---these can be used to interpret type theory with identity types
\cite[Theorem 3.3.5]{van-den-berg12}. As the further conditions and ultimate construction of a model of type
theory are orthogonal to the construction of the diagonal factorizations, we will not discuss them further
here.

\begin{definition}
  \label{def:diagonal-factorizations}
  A \emph{functorial choice of diagonal factorizations} in an algebraic weak factorization system
  $\pair{L}{R}$ on some $\CC$ is a functor $\Map{R} \to \Map{L} \times_\CC \Map{R}$ that, for every $R$-map
  $f : X \to \GG$, assigns a factorization $(P_f,r_f,p_f)$ of the diagonal $\diag{f} : X \to X \times_\GG X$
  as shown below.
  \[
    \begin{tikzcd}
      { } & P_f \ar[dashed]{dr}{p_f} \\
      X \ar[dashed]{ur}{r_f} \ar{rr}{\diag{f}} & { } & X \times_\GG X
    \end{tikzcd}
  \]
\end{definition}

\begin{definition}
  A functorial choice of diagonal factorizations is \emph{stable} when for every pullback square as shown on
  the left below, the square on the right is also a pullback.
  \[
    \begin{tikzcd}
      X' \pullback \ar{d}[left]{f'} \ar{r}{h} & X \ar{d}{f} \\
      \GG' \ar{r}[below]{k} & \GG
    \end{tikzcd}
    \qquad
    \rightsquigarrow
    \qquad
    \begin{tikzcd}
      P_{f'} \pullback \ar{d}[left]{p_{f'}} \ar{r}{P_{(h,k)}} & P_f \ar{d}{p_f} \\
      \GG' \ar{r}[below]{k} & \GG
    \end{tikzcd}
  \]
\end{definition}

\begin{remark}
  Note that the stability condition above only concerns the underlying \emph{maps} of the factorization, not
  their $L$- or $R$-map structures. As van den Berg and Garner note \cite[Propositions 3.3.6 and
  3.3.7]{van-den-berg12}, this condition is all that is needed.
\end{remark}

To build a choice of diagonal factorizations, our construction will assume that the awfs is one of a
\emph{family} of awfs's defined on each slice of the input category. The following is the specialization of
Swan's notion of fibred awfs over an arbitrary Grothendieck fibration \cite[Definition 4.4.6]{swan18b} to the
particular case of the codomain fibration over $\CC$ \cite[\S7.3]{swan18b}.

\begin{definition}
  A \emph{fibred algebraic weak factorization system} (\emph{fibred awfs}) on a category $\CC$ consists of a
  family of awfs's $\pair{L_\GG}{R_\GG}$ on $\CC/\GG$ for each $\GG \in \CC$ that is stable under reindexing.
\end{definition}

\begin{proposition}
  In a fibred awfs, we have functors $\Map{L_\GG} \to \Map{L_\GD}$ and $\Map{R_\GG} \to \Map{R_\GD}$ for every
  $\sigma : \GD \to \GG$, with the underlying map $(\CC/\GD)^\to \to (\CC/\GG)^\to$ given by reindexing in
  each case.
\end{proposition}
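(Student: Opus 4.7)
The plan is to observe that this statement is essentially a formal consequence of the fibredness hypothesis, once one unpacks what stability under reindexing means. First I would recall that for a fibred awfs on the codomain fibration, \emph{stable under reindexing} means (following Swan) that for each $\sigma : \GD \to \GG$, the pullback functor $\sigma^* : \CC/\GG \to \CC/\GD$ lifts to a strict morphism of awfs's from $\pair{L_\GG}{R_\GG}$ to $\pair{L_\GD}{R_\GD}$. In particular, the induced functor on arrow categories $(\sigma^*)^\to : (\CC/\GG)^\to \to (\CC/\GD)^\to$ commutes strictly with the functorial factorizations and with the pointed endofunctors $(L, \GF)$ and $(R, \GL)$.

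From this, the result is an instance of the general principle that a functor $F : \mathcal{D} \to \mathcal{E}$ that commutes with a pointed endofunctor on each side lifts canonically to a functor between the associated categories of pointed endofunctor coalgebras (resp.\ algebras). Concretely, for the $L$-map case: given $(m,s) \in \Map{L_\GG}$, so that $s : \cod(m) \to K_\GG(m)$ is a filler for the square $(\id, R_\GG(m)) : L_\GG(m) \to m$, I define $\sigma^*(m,s) \eqdef (\sigma^* m, \sigma^* s)$. Because $\sigma^*$ commutes with $L$, we have $\sigma^* K_\GG(m) = K_\GD(\sigma^* m)$, and because it commutes with $\GF$, the pulled-back map $\sigma^* s$ is a filler for $(\id, R_\GD(\sigma^* m)) : L_\GD(\sigma^* m) \to \sigma^* m$. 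On morphisms, a map $(h,k) : (m',s') \to (m,s)$ in $\Map{L_\GG}$ satisfies $K_\GG(h,k) s' = s k$, and applying $\sigma^*$ together with naturality of $K$ under reindexing yields $K_\GD(\sigma^*(h,k))\,(\sigma^* s') = (\sigma^* s)\,\sigma^* k$, giving a morphism in $\Map{L_\GD}$. Functoriality is immediate from functoriality of $\sigma^*$ and uniqueness of the induced coalgebra structure.

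The $R$-map case is entirely dual: the algebra structure $p : K_\GG(f) \to \dom(f)$ pulls back to $\sigma^* p : K_\GD(\sigma^* f) \to \sigma^* \dom(f)$, which is an algebra structure for $(R_\GD, \GL_\GD)$ on $\sigma^* f$ by the same commutation argument, and morphisms transport analogously. There is no real obstacle in the proof: the substance lies entirely in the fibredness assumption, and the only point requiring mild care is that the commutation of $\sigma^*$ with $L$, $R$, $\GF$, and $\GL$ must be strict (not just up to isomorphism), which is built into the notion of fibred awfs used here.
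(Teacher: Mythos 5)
The paper states this proposition without proof, treating it as an immediate consequence of the definition of a fibred awfs, so there is no explicit argument in the paper to compare against. Your proof is correct and supplies precisely the expected unpacking: stability under reindexing means $\sigma^*$ is a (strict) morphism of awfs's, and such a morphism commutes with the pointed endofunctors $(L,\GF)$ and $(R,\GL)$ on the nose, hence lifts to the categories of (co)algebras by pulling back the structure maps. One small side note: you correctly orient the underlying functor as $(\CC/\GG)^\to \to (\CC/\GD)^\to$ for $\sigma : \GD \to \GG$; the direction printed in the statement appears to be a typo.
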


\begin{remark}
  We will use $\rightarrowtail_\GG$ and $\twoheadrightarrow_\GG$ to indicate $L_\GG$-maps and $R_\GG$-maps
  respectively.
\end{remark}

\subsection{Cocylinders and homotopies}

As part of our construction, we will also make use of a \emph{functorial cocylinder} on the target
category. Given an object $X$, the cocylinder $\Cocyl{X}$ is to be thought of as the object of paths in $X$. A
functorial cocylinder is useful primarily because it allows us to say that fillers are unique up to homotopy.

\begin{definition}
  A \emph{functorial cocylinder} in a category $\CC$ consists of a functor $\Cocyl{(-)} : \CC \to \CC$
  equipped with natural transformations $\face0{(-)},\face1{(-)} : \Cocyl{(-)} \to \Id$ and a natural
  transformation $\degen{(-)} : \Id \to \Cocyl{(-)}$ that is a section of both $\face0{(-)}$ and $\face1{(-)}$.
\end{definition}

\begin{definition}
  Given a functorial cocylinder on $\CC$ and an object $\GG \in \CC$, there is an induced functorial
  cocylinder on $\CC/\GG$: given $a : A \to \GG$, we define $\Cocyl[\GG]{(A,a)}$ as the following pullback.
  \[
    \begin{tikzcd}
      \Cocyl[\GG]{(A,a)} \pullback \ar[dashed]{d} \ar[dashed]{r} & \Cocyl{A} \ar{d}{\Cocyl{a}} \\
      \GG \ar{r}[below]{\degen{\GG}} & \Cocyl{\GG}
    \end{tikzcd}
  \]
  It is straightforward to derive the transformations $\face[\GG]{i}{(-)}$ and $\degen[\GG]{(-)}$. We will
  write $\Cocyl[\GG]{A}$ rather than $\Cocyl[\GG]{(A,a)}$ when the arrow can be inferred.
\end{definition}

In the case that that the functorial cocylinder is given by exponentiation with an interval object, the
following definitions specialize to instances of the Leibniz exponential.

\begin{definition}
  Given a functorial cocylinder, we write
  $\bdy{A} \eqdef \pair{\face0{A}}{\face1{A}} : \Cocyl{A} \to A \times A$. For any map $f : X \to Y$, we
  define maps $\lface0{f}$ and $\lbdy{f}$ as follows.
  \[
    \begin{tikzcd}[column sep=2em, row sep=1.5em]
      \Cocyl{X} \ar[bend right=20]{ddr}[below left]{\Cocyl{f}} \ar[dashed]{dr}{\lface0{f}} \ar[bend left=20]{drr}{\face0{X}} \\
      & (\Cocyl{Y}) \times_Y X \ar{d} \pullback \ar{r} & X \ar{d}{f} \\
      & \Cocyl{Y} \ar{r}[below]{\face0{Y}} & Y
    \end{tikzcd}
    \qquad
    \begin{tikzcd}[column sep=2em, row sep=1.4em]
      \Cocyl{X} \ar[bend right=20]{ddr}[below left]{\Cocyl{f}} \ar[dashed]{dr}{\lbdy{f}} \ar[bend left=20]{drr}{\bdy{X}} \\
      & (\Cocyl{Y}) \times_Y (X \times X) \ar{d} \pullback \ar{r} & X \times X \ar{d}{f \times f} \\
      & \Cocyl{Y} \ar{r}[below]{\bdy{Y}} & Y \times Y
    \end{tikzcd}
  \]
\end{definition}

\begin{definition}
  Given maps $f_0,f_1 : X \to Y$, a \emph{homotopy from $f_0$ to $f_1$} is a map $\psi : X \to \Cocyl{Y}$ such that
  $(\face{i}{Y})\psi = f_i$ for $i \in \{0,1\}$. We write $\psi : f_0 \sim f_1$.
\end{definition}

We will need to know that for any awfs $\pair{L}{R}$ satisfying certain conditions, $L$-maps between fibrant
objects (that is, $A$ with an $R$-map structure on $!_A : A \to 1$) give rise to deformation retracts.

\begin{definition}
  A map $f : X \to Y$ is a \emph{(left) deformation retract} if there exists a map $g : Y \to X$ with
  $gf = \id_X$ and a homotopy $\psi : fg \sim \id_Y$. Given a category $\CC$ with a functorial cocylinder, we
  write $\DefRet{\CC}$ for the category whose objects are deformation retracts $(f,g,\psi)$ and morphisms
  $(f',g',\psi') \to (f,g,\psi)$ are commutative squares $(h,k) : f' \to f$ such that $hg' = gk$ and
  $(\Cocyl{k})\psi' = \psi k$.
\end{definition}

\begin{lemma}
  \label{lem:left-to-deformation}
  Fix a category $\CC$ with a functorial cocylinder and an awfs $\pair{L}{R}$, together with a functor
  $\Map{R} \to \Map{R}$ assigning an $R$-map structure to $\lbdy{f}$ for every $R$-map $f$. Then we have a
  functor $\Map{L} \times_{\CC \times \CC} (\Map{R} \times \Map{R}) \to \DefRet{\CC}$ that, for each $L$-map
  $m : A \rightarrowtail B$ with $R$-map structures on $!_A : A \to 1$ and $!_B : B \to 1$, produces a
  deformation retract structure on $m$.
\end{lemma}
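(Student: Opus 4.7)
The plan is to produce both the retraction and the homotopy by solving lifting problems against $R$-maps, using functoriality of these lifts to obtain a functor rather than just a pointwise assignment.

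First, to construct the retraction $g : B \to A$, I would lift $m$ against $!_A : A \to 1$. Concretely, the square with top $\id_A : A \to A$, left $m$, right $!_A$, and bottom $!_B$ commutes trivially, and since $m$ is an $L$-map and $!_A$ an $R$-map, we get a functorial diagonal filler $g : B \to A$ with $gm = \id_A$ (and $!_A g = !_B$).

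Next, to construct the homotopy $\psi : mg \sim \id_B$, I would lift $m$ against $\lbdy{!_B}$, which is an $R$-map by the hypothesized functor $\Map{R} \to \Map{R}$. The lifting problem has top map $\degen{B} \circ m : A \to \Cocyl{B}$ and bottom map $\bigl(\degen{1} \circ {!_B},\, (mg, \id_B)\bigr) : B \to (\Cocyl{1}) \times_1 (B \times B)$. Commutativity reduces to checking $\bdy{B} \degen{B} m = (m, m)$, which holds because $\degen{}$ is a common section of $\face0{}$ and $\face1{}$, and to checking $(mg, \id_B) \circ m = (m, m)$, which holds because $gm = \id_A$. The resulting filler $\psi : B \to \Cocyl{B}$ satisfies $\bdy{B}\psi = (mg, \id_B)$ by the lower triangle, so $\psi$ is a homotopy from $mg$ to $\id_B$.

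For functoriality, note that both $g$ and $\psi$ are obtained by applying the canonical lifting operation of the awfs $\pair{L}{R}$ to specific commutative squares built naturally from $m$, the $R$-map structures on $!_A, !_B$, and the given $R$-map structure on $\lbdy{!_B}$. The lifting operation $\Map{L} \times_{\CC^\to} \Map{R} \to \CC^\to$ is a standard functor in the awfs setup, and the assembly of the lifting squares from the input data is functorial in morphisms of $\Map{L}$ and $\Map{R}$; composing these yields the required functor to $\DefRet{\CC}$.

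The main obstacle to watch is really just bookkeeping: verifying that a morphism $(h, k) : (m', \ldots) \to (m, \ldots)$ in the domain category induces the required compatibility $hg' = gk$ and $(\Cocyl{k})\psi' = \psi k$. These follow from the fact that morphisms of $R$-map structures intertwine the lifts---for $g$ this is the $R$-map morphism structure on $!_{(-)}$ (coming from the chosen structures and any square between them), and for $\psi$ it is the naturality of the $R$-map structure on $\lbdy{(-)}$ supplied by the assumed functor---so no genuinely new verification beyond tracing through the universal property of the awfs liftings is required.
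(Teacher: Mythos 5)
Your proposal matches the paper's proof essentially verbatim: both $g$ and $\psi$ are obtained from exactly the same two lifting problems (lifting $\id_A$ through $m$ against the $R$-map $!_A$, then lifting $\degen{B}\,m$ through $m$ against the $R$-map $\lbdy{!_B}$), and functoriality is deduced in both cases from the functoriality of the awfs lifting operation. The only cosmetic difference is that you write out the $\Cocyl{1}$ component of the codomain of $\lbdy{!_B}$ explicitly, whereas the paper suppresses it.
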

\begin{proof}
  We produce $g : B \to A$ and $\psi : mg \sim \id_B$ by solving the following lifting problems.
  \[
    \begin{tikzcd}[row sep=3em, column sep=3em]
      A \ar[tail]{d}[left]{m} \ar{r}{\id_A} & A \ar[two heads]{d}{!_A} \\
      B \ar{r}[below]{!_B} \ar[dashed]{ur}{g} & 1
    \end{tikzcd}
    \qquad\qquad
    \begin{tikzcd}[row sep=3em, column sep=8em]
      A \ar[tail]{d}[left]{m} \ar{r}{(\degen{B})m} & \Cocyl{B} \ar[two heads]{d}{\lbdy{!_B}} \\
      B \ar{r}[below]{\pair{mg}{\id}} \ar[dashed]{ur}{\psi} & B \times B
    \end{tikzcd} \qedhere
  \]
\end{proof}

\section{Constructing a stable factorization}
\label{sec:construction}

\begin{axioms}
  \label{axioms:awfs}
  Fix a category $\CC$ with finite limits and a fibred awfs $\pair{L_\GG}{R_\GG}_{\GG \in \CC}$. We require
  the following.
  \begin{axenum}
  \item A functorial cocylinder $(\Cocyl{(-)},\face0{(-)},\face1{(-)},\degen{(-)})$.
  \item \label{axiom:awfs:lface} A functor $\Map{R_\GG} \to \Map{R_1}$ making $\lface[\GG]0{f}$ an $R_1$-map
    for each $R_\GG$-map $f$.
  \item \label{axiom:awfs:lbdy} A functor $\Map{R_\GG} \to \Map{R_\GG}$ making $\lbdy[\GG]{f}$ an $R_\GG$-map
    for each $R_\GG$-map $f$.
  \end{axenum}
\end{axioms}

\begin{remark}
  For the moment we choose a set of axioms that will take us most directly to our result; in
  \cref{sec:examples}, we present a second set formulated in terms of a (pre) algebraic model structure. In a
  situation where the $\pair{L_\GG}{R_\GG}$ are the trivial cofibrations and fibrations of a fibred model
  structure, \cref{axiom:awfs:lface} can be read intuitively as a combination of more familiar
  conditions. First, that $\lface[\GG]0{(-)}$ takes fibrations to trivial fibrations. Second, that trivial
  fibration structure is fiberwise structure: a map $f \in (\CC/\GG)^\to$ is \emph{globally} a trivial
  fibration as soon as it is a trivial fibration as a map over $\GG$. And finally, that every trivial
  fibration is (of course) a fibration.
\end{remark}
  
We aim to construct a stable functorial choice of diagonal factorizations for the awfs $\pair{L_1}{R_1}$. The
central idea is to define the identity type in context $\GG$ by factorizing the diagonal with respect to
$\pair{L_\GG}{R_\GG}$. This is similar to the approach taken by van den Berg and Garner, the difference being
that we use factorization relative to $\GG$ rather than a path object.
\[
  \begin{tikzcd}[row sep=1.3em]
    { } & K_\GG(\GD_f) \ar[dashed, two heads]{dr}[near start]{R_\GG(\GD_f)}[very near end,above right]{\GG} \\
    X \ar[two heads]{dr}[below left]{f}[very near end,below left]{1} \ar[dashed, tail]{ur}[near end]{L_\GG(\GD_f)}[very near end,below right]{\GG} \ar{rr}{\diag{f}} & { } & X \times_\GG X  \ar[two heads]{dl}{\pair{f}{f}}[very near end,below right]{1} \\
    { } & \GG
  \end{tikzcd}
\]

This definition evidently satisfies the stability condition. However, note that the maps $L_\GG(\GD_f)$ and
$R_\GG(\GD_f)$ are \emph{a priori} only left and right maps with respect to $\pair{L_\GG}{R_\GG}$, whereas
\cref{def:diagonal-factorizations} requires that they be so with respect to $\pair{L_1}{R_1}$. The remainder
of this section is dedicated to establishing that this is indeed the case (and is functorially so).  We will
generally not check functoriality explicitly, as it is straightforward to verify.

The following two lemmas hold in a fibred awfs over any Grothendieck fibration (and with $1$ replaced by any
$\GG'$ in a suitable fashion), but we give concrete proofs for the sake of
self-containedness. \cref{lem:sigma-preserves-left} is the easy half of the result we need: it shows that
$L_\GG(\GD_f)$ above is an $L_1$-map.

\begin{lemma}
  \label{lem:restrict-right}
  We have a functor $(\CC/\GG)^\to \times_{\CC^\to} \Map{R_1} \to \Map{R_\GG}$ that, for each
  $f : (X,x) \to (Y,y)$ with an $R_1$-map structure, provides an $R_\GG$-map structure on $f$.
\end{lemma}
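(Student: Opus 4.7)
The plan is to realize $f : (X,x) \to (Y,y)$ in $\CC/\GG$ as a pullback, inside $\CC/\GG$, of its reindexing along the terminal map $\GG \to 1$, and then to transport the $R_\GG$-map structure through that pullback. A more naive approach—restricting the $R_1$-lifting structure to lifting problems against $L_\GG$-maps—fails, because $L_\GG$-maps in $\CC/\GG$ need not carry any canonical $L_1$-map structure on their underlying maps in $\CC$; the fibred structure has to mediate.

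First, by the fibred awfs hypothesis, reindexing along $\GG \to 1$ gives a functor $\Map{R_1} \to \Map{R_\GG}$. Applied to the input $R_1$-structure on $f : X \to Y$, this endows the product projection $f \times \id_\GG : (X \times \GG, \pi_2) \to (Y \times \GG, \pi_2)$ with an $R_\GG$-map structure in $\CC/\GG$.

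Next, I will check that the square
\[
\begin{tikzcd}
(X,x) \ar{d}[left]{f} \ar{r}{(\id_X, x)} & (X \times \GG, \pi_2) \ar{d}{f \times \id_\GG} \\
(Y,y) \ar{r}[below]{(\id_Y, y)} & (Y \times \GG, \pi_2)
\end{tikzcd}
\]
is a pullback in $\CC/\GG$. All four arrows are maps over $\GG$ by $y \circ f = x$, commutativity is immediate, and the universal property reduces to checking that the underlying square is a pullback in $\CC$: any cone consisting of $(p_1,p_2) : Z \to X \times \GG$ and $q : Z \to Y$ satisfying the compatibility condition is forced to satisfy $p_2 = x p_1$ and $q = f p_1$, so $p_1$ is the unique mediator. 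Pulling the $R_\GG$-structure from the previous paragraph back along $(\id_Y, y)$ then yields the desired $R_\GG$-structure on $f$ in $\CC/\GG$, using the standard fact that $R$-maps are functorially closed under pullback.

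Functoriality in $(\CC/\GG)^\to \times_{\CC^\to} \Map{R_1}$ is inherited step by step, from functoriality of reindexing and of pullback on $R$-maps, together with the evident naturality of the pullback square above in the data $((X,x),(Y,y),f)$. I do not foresee a serious obstacle; the only non-mechanical step is recognizing the pullback presentation, after which the result drops out of the fibred awfs structure.
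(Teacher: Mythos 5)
Your proof is correct and takes essentially the same route as the paper's: reindex $f$ along $\GG \to 1$ to get an $R_\GG$-map structure on $\GG \times f$ (you write it as $f \times \id_\GG$, but it is the same map up to reordering the product), then observe that $f$ is the pullback of this map in $\CC/\GG$ along $\pair{y}{\id_Y}$ and invoke pullback-stability of $R_\GG$-maps. The only difference is that you spell out the pullback verification and the functoriality, which the paper leaves implicit.
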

\begin{proof}
  As the awfs is fibred, we have an $R_\GG$-map structure on
  $\GG \times f : (\GG \times X,\pi_1) \to (\GG \times Y,\pi_1)$. We now observe that $f$ is a pullback of
  $\GG \times f$ along $\pair{y}{\id} : Y \to \GG \times Y$, so we can apply the stability of $R_\GG$-maps
  under pullback.
\end{proof}

\begin{lemma}
  \label{lem:sigma-preserves-left}
  We have a functor $\Map{L_\GG} \to \Map{L_1}$ that, for each $m : (A,a) \rightarrowtail_\GG (B,b)$, provides
  an $L_1$-map structure on $m$.
\end{lemma}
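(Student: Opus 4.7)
The plan is to build the required $L_1$-map structure $s : B \to K_1(m)$ by solving a lifting problem in the slice $\CC/\GG$, where we already have good control over $L_\GG$-maps and $R_\GG$-maps. Unfolded, what we need to produce is a map $s$ satisfying $sm = L_1(m)$ and $R_1(m)s = \id_B$, i.e., a diagonal filler of the square $(L_1(m),\id_B) : m \to R_1(m)$ in $\CC$.

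First, I would set everything up in $\CC/\GG$. The hypothesis gives us $m : (A,a) \to (B,b)$ as an $L_\GG$-map. I lift the object $K_1(m)$ into $\CC/\GG$ by equipping it with the structure map $b \circ R_1(m) : K_1(m) \to \GG$. Then $R_1(m) : (K_1(m), b \circ R_1(m)) \to (B,b)$ is a morphism in $\CC/\GG$, and the square $(L_1(m),\id_B) : m \to R_1(m)$ lives entirely over $\GG$ (using $R_1(m)L_1(m) = m$ to see that $L_1(m)$ is a map over $\GG$).

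Second, I would endow $R_1(m)$, already an $R_1$-map by the awfs $\pair{L_1}{R_1}$, with an $R_\GG$-map structure via \cref{lem:restrict-right}. Since $m$ is an $L_\GG$-map and $R_1(m)$ is (now) an $R_\GG$-map, the functorial lifting of $L_\GG$-maps against $R_\GG$-maps in the awfs $\pair{L_\GG}{R_\GG}$ supplies a filler $s : B \to K_1(m)$ for the square $(L_1(m),\id_B)$. The triangle identities that characterize the lift in $\CC/\GG$ are exactly the two equations $sm = L_1(m)$ and $R_1(m)s = \id_B$ needed for an $L_1$-map structure on $m$.

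For functoriality, given a morphism $(h,k) : m' \to m$ in $\Map{L_\GG}$, the induced square $(K_1(h,k),k) : R_1(m') \to R_1(m)$ is a morphism in $\Map{R_1}$ by functoriality of $R_1$, and by functoriality of \cref{lem:restrict-right} it is a morphism in $\Map{R_\GG}$ after reindexing. The functoriality of the $L_\GG$-against-$R_\GG$ lifting then yields $K_1(h,k) \circ s' = s \circ k$, which is precisely the condition defining a morphism in $\Map{L_1}$. I do not anticipate a serious obstacle; the only thing to watch carefully is that each of the pieces involved is a morphism in $\CC/\GG$ rather than merely in $\CC$, but this is immediate from the construction.
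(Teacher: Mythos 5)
Your argument is correct, and it takes a genuinely different route from the paper's. The paper uses the $L_\GG$-map structure on $m$ to fill directly against $R_\GG(m)$, and then transports the result from $K_\GG(m)$ to $K_1(m)$ using the chain of squares
$m \to \GG \times m$ followed by the identification $K_\GG(\GG \times m) \cong \GG \times K_1(m)$ and the projection $\pi_2$; the fibred-ness of the awfs is invoked at the $L$-side, to identify the top composite with $L_1(m)$. You instead move the fibred-ness work to the $R$-side: you equip $K_1(m)$ with the structure map $b \circ R_1(m)$, invoke \cref{lem:restrict-right} to upgrade the canonical $R_1$-algebra structure on $R_1(m)$ to an $R_\GG$-algebra structure, and then lift the $L_\GG$-map $m$ against the $R_\GG$-map $R_1(m)$ in $\CC/\GG$, the square $(L_1(m),\id_B)$ living over $\GG$ because $b \circ R_1(m) \circ L_1(m) = b \circ m = a$. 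Since an $L_1$-map structure on $m$ is by definition a filler of $(L_1(m),\id_B) : m \to R_1(m)$, you are done. What your route buys is modularity: it reuses \cref{lem:restrict-right}, making the two lemmas dovetail cleanly, and avoids reasoning about $K_\GG(m)$ or the explicit identification $K_\GG(\GG \times m) \cong \GG \times K_1(m)$. What the paper's route buys is self-containedness and symmetry: \cref{lem:restrict-right} and \cref{lem:sigma-preserves-left} are presented as the two halves of the same basic reindexing phenomenon, each proved directly from the fibred structure. Your functoriality sketch is also sound and follows from the functoriality of $R_1$, of \cref{lem:restrict-right}, and of the canonical lifting of $L_\GG$- against $R_\GG$-maps.
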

\begin{proof} Let $m : (A,a) \rightarrowtail_\GG (B,b)$ be given.  Note that we have
  $\GG \times m : (\GG \times A,\pi_1) \to (\GG \times B, \pi_1)$ and
  $(\pair{a}{\id_A},\pair{b}{\id_B}) : m \to \GG \times m$ in $(\CC/\GG)^\to$.

  We construct the
  necessary filler for $(L_1(m),\id_B) : m \to R_1(m)$ as shown below.
  \[
    \begin{tikzcd}[column sep=4em]
      A \ar[tail]{d}[left]{m}[very near end,left]{\GG} \ar{r}{L_\GG(m)} & K_\GG(m) \ar[two heads]{d}{R_\GG(m)}[very near end,left]{\GG} \ar{rr}{K_\GG(\pair{a}{\id_A},\pair{b}{\id_B})} && K_\GG(\GG \times m) \cong \GG \times K_1(m) \ar[two heads]{d}[very near end,left]{\GG} \ar{r}{\pi_2} & K_1(m) \ar[two heads]{d}{R_1(m)}[very near end,left]{1} \\
      B \ar[dashed]{ur} \ar{r}[below]{\id_B} & B \ar{rr}[below]{\pair{b}{\id}} && \GG \times B \ar{r}[below]{\pi_2} & B
    \end{tikzcd}
  \]
  Here we use the fact that the awfs is fibred to see that the top composite is indeed
  $L_1(m)$.
\end{proof}

The next lemma allows us to transfer an $R_1$-map structure forward along an $L_\GG$-map when the target is at
least an $R_\GG$-map (as a map into the terminal object of $\CC/\GG$). We will use this to derive an $R_1$-map
structure on the map $K_\GG(\GD_f) \to \GG$ from the $R_1$-map structure on $X \to \GG$.

\begin{lemma}
  \label{lem:right-across-left}
  We have a functor
  $\Map{L_\GG} \times_{\CC/\GG \times \CC/\GG} (\Map{R_1} \times_\CC \Map{R_\GG}) \to \Map{R_1}$ that, given
  $m : (A,a) \rightarrowtail_\GG (B,b)$ with an $R_1$-map structure on $a : A \to \GG$ and an $R_\GG$-map
  structure on $b : (B,b) \to (\GG,\id)$, produces an $R_1$-map structure on $b$.
\end{lemma}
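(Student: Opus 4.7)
The plan is to promote $m$ to a deformation retract over $\GG$ using \cref{axiom:awfs:lbdy}, then use the resulting retraction and homotopy to transport the $R_1$-structure from $a$ to $b$, with \cref{axiom:awfs:lface} providing the lift needed to correct the homotopic slack between a naive transport and the desired filler.

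First, \cref{lem:restrict-right} promotes the $R_1$-structure on $a$ to an $R_\GG$-structure on $a : (A, a) \to (\GG, \id)$, so together with the given $R_\GG$-structure on $b$ both $A$ and $B$ become fibrant objects of $\CC/\GG$ with respect to $\pair{L_\GG}{R_\GG}$. Applying \cref{lem:left-to-deformation} to the awfs on $\CC/\GG$---whose hypothesis on $\lbdy$ is exactly \cref{axiom:awfs:lbdy}---then yields a retraction $r : B \to A$ over $\GG$ with $rm = \id_A$, together with a homotopy $\psi : B \to \Cocyl[\GG]{B}$ from $mr$ to $\id_B$.

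Next I build the universal filler $p_b : K_1(b) \to B$ for $(L_1(b), \id_B) : b \to R_1(b)$ in two stages. Using that $R_1$-maps lift functorially against $L_1$-maps, the $R_1$-structure on $a$ solves the square $(r, R_1(b)) : L_1(b) \to a$, producing $h : K_1(b) \to A$ with $h L_1(b) = r$ and $a h = R_1(b)$. The composite $mh : K_1(b) \to B$ satisfies $b(mh) = R_1(b)$ but only $(mh) L_1(b) = mr$, so to correct the top triangle I observe that, under the canonical identification $\Cocyl[\GG]{\GG} \cong \GG$, the Leibniz face $\lface[\GG]0{b}$ coincides with $\face[\GG]0{B} : \Cocyl[\GG]{B} \to B$, and is therefore an $R_1$-map by \cref{axiom:awfs:lface}. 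Lifting the square $(\psi, mh) : L_1(b) \to \face[\GG]0{B}$ yields $\tilde\psi : K_1(b) \to \Cocyl[\GG]{B}$ with $\tilde\psi L_1(b) = \psi$ and $\face[\GG]0{B} \tilde\psi = mh$. Setting $p_b \eqdef \face[\GG]1{B} \tilde\psi$, we get $p_b L_1(b) = \face[\GG]1{B} \psi = \id_B$ and $b p_b = b \face[\GG]0{B} \tilde\psi = b(mh) = R_1(b)$, where the first equality uses that $b \face[\GG]0{B} = b \face[\GG]1{B}$ by the defining pullback for $\Cocyl[\GG]{B}$.

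The main conceptual point is that $m$ only gives $b$ access to the fillers of $a$ up to the homotopy $\psi$, and \cref{axiom:awfs:lface} is precisely what lets us lift this homotopy along $L_1(b)$ and strictify the candidate filler---so that axiom plays a role dual to \cref{axiom:awfs:lbdy}, which earlier supplied the homotopy. Functoriality of the whole assignment follows from that of each ingredient (\cref{lem:restrict-right}, \cref{lem:left-to-deformation}, the functorial lifting of $L_1$-maps against $R_1$-maps, and the naturality of the cocylinder), so it need not be checked by hand.
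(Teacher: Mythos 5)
Your proof is correct and follows essentially the same route as the paper's: promote $m$ to a deformation retract in $\CC/\GG$ via \cref{lem:left-to-deformation}, lift against $a$ to get a candidate filler $mh$ that satisfies only the bottom triangle, then correct the top triangle by lifting the homotopy $\psi$ along $L_1(b)$ against $\lface[\GG]0{b}$. Your observation that $\Cocyl[\GG]{\GG} \cong \GG$, so that $\lface[\GG]0{b}$ identifies with $\face[\GG]0{B}$, is a clean simplification of the second lifting square that the paper writes out in full Leibniz form, but the underlying argument is identical.
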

\begin{proof}
  Note that $a : (A,a) \to (\GG,\id)$ is also an $R_\GG$-map because the awfs is fibred. By
  \cref{lem:left-to-deformation} (for which we use \cref{axiom:awfs:lbdy}), we see that $m$ is a deformation
  retract in $\CC/\GG$, so we have $r : (B,b) \to (A,a)$ with $rm = \id_A$ and a homotopy
  $\psi : mr \sim \id_B$.

  We need a lift for $(\id,R_1(b)) : L_1(b) \to b$.  We first transform this into a lifting problem against
  $a$ and produce a lift, using the $L_1$-map structure on $L_1(b)$ and $R_1$-map structure on $a$.
  \[
    \begin{tikzcd}[row sep=3em, column sep=3em]
      B \ar[tail]{d}[left]{L_1(b)}[very near end,left]{1} \ar{r}{\id_B} & B \ar[two heads]{d}{b}[very near end,left]{\GG} \ar{r}{r} & A \ar[two heads]{d}{a}[very near end,left]{1} \\
      K_1(b) \ar[dashed, bend left=10]{urr}[near start]{j_a} \ar{r}[below]{R_1(b)} & \GG \ar{r}[below]{\id_\GG} & \GG
    \end{tikzcd}
  \]
  Note that $mj_a : K_1(b) \to B$ makes the lower triangle of the original problem commute, but not the upper
  triangle. We rectify this by solving the following lifting problem in $\CC/\GG$, using
  \cref{axiom:awfs:lface} to obtain an $R_1$-map structure on $\lface[\GG]0{b}$.
  \[
    \begin{tikzcd}[row sep=3em, column sep=8em]
      B \ar[tail]{d}[left]{L_1(b)}[very near end, left]{1} \ar{r}{\psi} & \Cocyl[\GG]{B} \ar[two heads]{d}{\lface[\GG]0{b}}[very near end,left]{1} \\
      K_1(b) \ar[dashed]{ur}{j_b} \ar{r}[below]{\pair{(\Cocyl[\GG]{R_1(b)})\psi}{mj_a}} & (\Cocyl[\GG]{\GG}) \times_\GG B
    \end{tikzcd}
  \]
  The map $(\face1{B})j_b : K_1(b) \to B$ is our desired lift.
\end{proof}

Finally, we show that any $R_\GG$-map between $R_1$-maps is itself an $R_1$-map. Once we have derived an
$R_1$-map structure on the map $K_\GG(\GD_f) \to \GG$ via the previous lemma, we can use this to derive an
$R_1$-map structure on $R_\GG(\GD_f) : K_\GG(\GD_f) \to X \times_\GG X$.

\begin{lemma}
  \label{lem:heterogenize-right}
  We have a functor
  $\Map{R_\GG} \times_{\CC/\GG \times \CC/\GG} (\Map{R_1} \times_\CC \Map{R_1}) \to \Map{R_1}$ that, given
  $f : (X,x) \twoheadrightarrow_\GG (Y,y)$ with $R_1$-map structures on $x$ and $y$, produces an $R_1$-map
  structure on $f$.
\end{lemma}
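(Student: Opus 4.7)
I would mirror the proof template of \cref{lem:right-across-left} to construct an $R_1$-map structure on $f$, i.e.\ to functorially solve the lifting problem $(\id_X, R_1(f)) : L_1(f) \to f$ in $\CC$.

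The first step is to use $x$'s $R_1$-map structure to produce a partial lift, by solving
\[
  \begin{tikzcd}
    X \ar{d}[left]{L_1(f)} \ar{r}{\id_X} & X \ar{d}{x} \\
    K_1(f) \ar{r}[below]{yR_1(f)} \ar[dashed]{ur}{j} & \GG\mathrlap{,}
  \end{tikzcd}
\]
whose commutativity follows from $yR_1(f)L_1(f) = yf = x$. The resulting $j : K_1(f) \to X$ already satisfies the upper triangle $jL_1(f) = \id_X$ of the original problem, together with $xj = yR_1(f)$; so $fj$ agrees with $R_1(f)$ after post-composition with $y$, but need not agree in $Y$.

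The correction step uses the $R_1$-map structure on $\lface[\GG]{0}{f}$ supplied by \cref{axiom:awfs:lface}. I would set up
\[
  \begin{tikzcd}[column sep=6em]
    X \ar{d}[left]{L_1(f)} \ar{r}{\degen[\GG]{X}} & \Cocyl[\GG]{X} \ar{d}{\lface[\GG]{0}{f}} \\
    K_1(f) \ar[dashed]{ur}{\xi} \ar{r}[below]{\pair{\phi}{j}} & (\Cocyl[\GG]{Y}) \times_Y X
  \end{tikzcd}
\]
and take $\face[\GG]{1}{X} \circ \xi : K_1(f) \to X$ as the final filler. For the square to commute, the datum $\phi : K_1(f) \to \Cocyl[\GG]{Y}$ must be a path in $Y$ over $\GG$ from $fj$ to $R_1(f)$ whose restriction along $L_1(f)$ is the constant path $\degen[\GG]{Y} \circ f$; granted such $\phi$, a direct calculation shows that $\face[\GG]{1}{X} \circ \xi$ satisfies both triangles of the original lifting problem.

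The main obstacle is the construction of $\phi$. I expect it to come from a deformation retract argument in $\CC/\GG$, parallel to how $\psi$ and $m j_a$ combine in the proof of \cref{lem:right-across-left}. Concretely, I would factor $L_1(f)$ in $\CC/\GG$ as $X \rightarrowtail_\GG K' \twoheadrightarrow_\GG K_1(f)$, apply \cref{lem:right-across-left} to the left factor to show that $K'$ is $R_1$-fibrant over the terminal object, and then invoke \cref{lem:left-to-deformation} in $\CC/\GG$ to equip the left factor with a deformation retract structure. The retraction and accompanying homotopy, combined with the $R_\GG$-map structure on $f$ and \cref{axiom:awfs:lbdy}, should yield the required $\phi$. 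Functoriality in the input data is routine to verify at each step.
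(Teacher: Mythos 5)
Your first and third lifts match the paper exactly, and you have correctly identified the specification of the intermediate datum $\phi$: a path over $\GG$ from $fj$ to $R_1(f)$ that is constant along $L_1(f)$. But your plan for constructing $\phi$ has a genuine gap, and this is where the content of the lemma lies. The paper produces $\phi$ by one more direct lift, this time using the hypothesis that $y : Y \to \GG$ is an $R_1$-map: by \cref{axiom:awfs:lbdy} (instantiated at $\GG = 1$), $\lbdy{y}$ is an $R_1$-map, and one lifts $(\degen{Y})f$ against $\lbdy{y}$ with lower map $\pair{(\degen{\GG})yR_1(f)}{\pair{fj_x}{R_1(f)}}$ to obtain $j_y : K_1(f) \to \Cocyl{Y}$, whence $\phi = \pair{yR_1(f)}{j_y} : K_1(f) \to \Cocyl[\GG]{Y}$. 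Notice this is a lift against $\lbdy{y}$, not $\lbdy[\GG]{f}$ as your ``combined with the $R_\GG$-map structure on $f$ and \cref{axiom:awfs:lbdy}'' suggests; it is the $R_1$-structure on $y$ that you are not genuinely using.

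Your proposed replacement---factor $L_1(f)$ in $\CC/\GG$ as $X \rightarrowtail_\GG K' \twoheadrightarrow_\GG K_1(f)$, apply \cref{lem:right-across-left} and then \cref{lem:left-to-deformation} to get a retraction $g : K' \to X$ and homotopy $\psi : mg \sim \id_{K'}$---does not close. All the data produced lives on $K'$, and you need $\phi$ with domain $K_1(f)$ and codomain $\Cocyl[\GG]{Y}$; there is no map $K_1(f) \to K'$ along which to transport $\psi$. The only candidate would be a section of $p : K' \to K_1(f)$ compatible with $m$, obtained by lifting the $L_1$-map $L_1(f)$ against $p$---but $p$ is an $R_\GG$-map between $R_1$-fibrant objects over $\GG$, i.e.\ exactly the kind of map whose $R_1$-structure \cref{lem:heterogenize-right} is trying to establish, so that route is circular. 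In effect you have tried to import the deformation-retract machinery that \cref{lem:right-across-left} uses, but the present lemma replaces that with a single lift against $\lbdy{y}$, and you should do the same.
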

\begin{proof}
  We must produce a diagonal lift for $(\id,R_1(f)) : L_1(f) \to f$. First, we use the assumption that $x$ is an
  $R_1$-map to solve the following lifting problem.
  \[
    \begin{tikzcd}[row sep=3em, column sep=3em]
      X \ar[tail]{d}[left]{L_1(f)}[very near end,left]{1} \ar{r}{\id_X} & X \ar[two heads]{d}{x}[very near end,left]{1} \\
      K_1(f) \ar[dashed]{ur}{j_x} \ar{r}[below]{yR_1(f)} & \GG
    \end{tikzcd}
  \]
  Next, we use $j_x$ to define a second filling problem, which we solve using \cref{axiom:awfs:lbdy} and the
  assumption that $y$ is an $R_1$-map.
  \[
    \begin{tikzcd}[row sep=3em, column sep=12em]
      X \ar[tail]{d}[left]{L_1(f)}[very near end,left]{1} \ar{r}{(\degen{Y})f} & \Cocyl{Y} \ar[two heads]{d}{\lbdy{y}}[very near end,left]{1} \\
      K_1(f) \ar[dashed]{ur}{j_y} \ar{r}[below]{\pair{(\degen{\GG})yR_1(f)}{\pair{fj_x}{R_1(f)}}} & (\Cocyl{\GG}) \times_{\GG \times \GG} (Y \times Y)
    \end{tikzcd}
  \]
  Finally, we have an $R_1$-map structure on $\lface[\GG]0{f}$ by \cref{axiom:awfs:lface}. Thus we can solve
  the following lifting problem. (Note that in the bottom map, we use the definition of $\Cocyl[\GG]{Y}$ as
  $\GG \times_{\Cocyl{\GG}} (\Cocyl{Y})$.)
  \[
    \begin{tikzcd}[row sep=3em, column sep=8em]
     X \ar[tail]{d}[left]{L_1(f)}[very near end,left]{1} \ar{r}{\degen[\GG]{X}} & \Cocyl[\GG]{X} \ar[two heads]{d}{\lface[\GG]0{f}}[very near end,left]{1} \\
      K_1(f) \ar[dashed]{ur}{j_f} \ar{r}[below]{\pair{\pair{yR_1(f)}{j_y}}{j_x}} & (\Cocyl[\GG]{Y}) \times_Y X
    \end{tikzcd}
  \]
  Our final diagonal filler is then given by $(\face[\GG]1{X})j_f : K_1(f) \to X$.
\end{proof}

Although our ultimate goal is stable factorization of diagonal maps, we can more generally obtain stable
factorization of all maps in $\CC/\GG$ between $R_1$-maps.

\begin{theorem}
  \label{thm:stable-factorization}
  We have a functor
  $\Map{R_\GG} \times_{\CC/\GG \times \CC/\GG} (\Map{R_1} \times_\CC \Map{R_1}) \to \Map{L_1} \times
  \Map{R_1}$ that, given $f : (X,x) \to (Y,y)$ in $\CC/\GG$ with $R_1$-map structures on $x : X \to \GG$ and
  $y : Y \to \GG$, produces $L_1$- and $R_1$-map structures on $L_\GG(f) : X \to K_\GG(f)$ and
  $R_\GG : K_\GG(f) \to Y$ respectively.
\end{theorem}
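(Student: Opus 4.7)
The plan is to assemble the preceding three lemmas in sequence. For the $L_1$-map structure on $L_\GG(f) : X \to K_\GG(f)$, observe that $L_\GG(f)$ is automatically an $L_\GG$-map by the awfs structure on $\CC/\GG$, so a single application of \cref{lem:sigma-preserves-left} yields an $L_1$-map structure directly.

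The bulk of the work is the $R_1$-map structure on $R_\GG(f) : K_\GG(f) \to Y$. My strategy is to arrange the hypotheses of \cref{lem:heterogenize-right}, which requires that both the source and target of $R_\GG(f)$, viewed as objects of $\CC/\GG$, carry $R_1$-map structures on their structure maps. Writing $k : K_\GG(f) \to \GG$ for the structure map of $K_\GG(f)$ in $\CC/\GG$ (that is, $k = y \circ R_\GG(f)$), the target $y$ is an $R_1$-map by hypothesis, so the remaining task is to equip $k$ with an $R_1$-map structure.

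To this end, I apply \cref{lem:right-across-left} to $L_\GG(f) : (X,x) \to (K_\GG(f),k)$. Its three inputs are: (i) an $L_\GG$-map structure on $L_\GG(f)$, which is automatic; (ii) an $R_1$-map structure on $x$, which is hypothesized; and (iii) an $R_\GG$-map structure on $k : (K_\GG(f),k) \to (\GG,\id_\GG)$. For (iii), decompose $k$ as $y \circ R_\GG(f)$: by the awfs, $R_\GG(f)$ is an $R_\GG$-map, and \cref{lem:restrict-right} converts the hypothesized $R_1$-map structure on $y$ into an $R_\GG$-map structure on $y : (Y,y) \to (\GG,\id_\GG)$, so composition closure of $R_\GG$-maps supplies the desired structure on $k$. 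Having the $R_1$-map structure on $k$ in hand, a final application of \cref{lem:heterogenize-right} to $R_\GG(f)$, using its automatic $R_\GG$-map structure together with the $R_1$-map structures on $k$ and $y$, produces the $R_1$-map structure on $R_\GG(f)$.

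The main challenge is not any new technical ingredient but the bookkeeping: carefully tracking which objects and maps live over $\GG$ versus over $1$, and threading the derived structures through the correct slots of each invoked lemma. Functoriality in the input data will follow from the functoriality of the three preceding lemmas, of composition of $R_\GG$-maps, and of the awfs operators $L_\GG$ and $R_\GG$.
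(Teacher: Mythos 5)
Your proof is correct and takes essentially the same route as the paper's. The only cosmetic difference is that where the paper writes ``$y : (Y,y) \to (\GG,\id)$ is an $R_\GG$-map because the awfs is fibred,'' you explicitly invoke \cref{lem:restrict-right}; these are the same argument, since \cref{lem:restrict-right} is precisely the packaged form of that reindexing-and-pullback step. Otherwise the decomposition is identical: \cref{lem:sigma-preserves-left} for $L_\GG(f)$, composition closure of $R_\GG$-maps to get $k = y R_\GG(f)$ an $R_\GG$-map, \cref{lem:right-across-left} applied to $L_\GG(f)$ to upgrade $k$ to an $R_1$-map, then \cref{lem:heterogenize-right} applied to $R_\GG(f)$.
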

\begin{proof}
  First, we have an $L_1$-map structure on $L_\GG(f)$ by \cref{lem:sigma-preserves-left}.

  Write $k \eqdef yR_\GG(f) : K_\GG(f) \to \GG$. We have a $R_\GG$-map structure on $y : (Y,y) \to (\GG,\id)$
  because the awfs is fibred. As $R_\GG$-maps are closed under composition, we also have an $R_\GG$-map
  structure on $k : (K_\GG(f),k) \to (\GG,\id)$. We may therefore apply \cref{lem:right-across-left} to derive
  an $R_1$-map structure on $k$.  We now have an $R_\GG$-map
  $R_\GG(f) : (K_\GG(f),k) \twoheadrightarrow_\GG (Y,y)$ between two $R_1$-maps, so we can apply
  \cref{lem:heterogenize-right} to derive an $R_1$-map structure on $R_\GG(f)$.
\end{proof}

\begin{corollary}
  We have a stable functorial choice of diagonal factorizations in $\pair{L_1}{R_1}$.
\end{corollary}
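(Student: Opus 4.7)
The plan is to obtain the diagonal factorization by instantiating \cref{thm:stable-factorization} at the diagonal of $f$, and then derive stability directly from the fibredness of the awfs. Given an $R_1$-map $f : X \twoheadrightarrow \GG$, view the diagonal $\GD_f : X \to X \times_\GG X$ as a morphism in $\CC/\GG$ between the objects $(X,f)$ and $(X \times_\GG X, f\pi_1)$. To apply the theorem we need $R_1$-map structures on both structure maps into $\GG$. The structure on $f$ is given. The structure on $f \pi_1 : X \times_\GG X \to \GG$ comes from functoriality of the $R_1$-map assignment under pullback (applied to $f$ along itself, giving $\pi_1$) composed with $f$, using that $R_1$-maps are functorially closed under composition.

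With these inputs in hand, \cref{thm:stable-factorization} produces $L_1$- and $R_1$-map structures on $L_\GG(\GD_f) : X \to K_\GG(\GD_f)$ and $R_\GG(\GD_f) : K_\GG(\GD_f) \to X \times_\GG X$ respectively, giving the desired factorization $P_f \eqdef K_\GG(\GD_f)$, $r_f \eqdef L_\GG(\GD_f)$, $p_f \eqdef R_\GG(\GD_f)$. Functoriality follows by tracking the functorial construction through \cref{lem:sigma-preserves-left}, \cref{lem:right-across-left}, \cref{lem:heterogenize-right}, and the functoriality of the fibred factorization itself on $\CC/\GG$; all the intermediate assignments of $R$- and $L$-structures are already functorial, so this is straightforward.

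For stability, consider a pullback square with top $h : X' \to X$ over $k : \GG' \to \GG$ and sides $f' : X' \twoheadrightarrow \GG'$ and $f$. Reindexing along $k$ sends the diagonal $\GD_f \in \CC/\GG$ to $\GD_{f'} \in \CC/\GG'$, since pullback of $X \times_\GG X$ along $k$ yields $X' \times_{\GG'} X'$. Because the awfs $\pair{L_\GG}{R_\GG}_\GG$ is fibred, the assigned factorization is stable under reindexing, so $K_{\GG'}(\GD_{f'})$ is the pullback of $K_\GG(\GD_f)$ along $k$, and the map $p_{f'}$ composed with $f'\pi_1 : X' \times_{\GG'} X' \to \GG'$ is the pullback of $p_f$ composed with $f \pi_1$. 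Projecting along the pullback square relating $X \times_\GG X$ and $X' \times_{\GG'} X'$ over $\GG$, $\GG'$ then gives that the square involving $p_{f'}$ and $p_f$ over $k$ is a pullback, as required.

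There is no real obstacle here: the construction is essentially a bookkeeping exercise, with all the genuinely technical content packaged into \cref{thm:stable-factorization} (and hence into \cref{lem:right-across-left} and \cref{lem:heterogenize-right}). The only point that could warrant explicit checking is that the reindexed factorization really matches the factorization in the reindexed slice on the nose, which is exactly the content of ``stable under reindexing'' in the definition of a fibred awfs.
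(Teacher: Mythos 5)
Your proposal matches the paper's proof almost exactly: both define $P_f \eqdef K_\GG(\GD_f)$ by regarding $\GD_f$ as a map $(X,f) \to (X\times_\GG X, f\pi_1)$ in $\CC/\GG$, obtain the $R_1$-map structure on $f\pi_1$ as the composite of $f$ with the pullback $\pi_1$, invoke \cref{thm:stable-factorization}, and appeal to the fibredness of the awfs for stability. Your unpacking of the stability step is somewhat more explicit than the paper's (which simply calls it ``immediate''), but it is the same argument.
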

\begin{proof}
  Given an $R_1$-map $f : X \twoheadrightarrow_1 \GG$, we define $P_f \eqdef K_\GG(\diag{f})$, regarding the
  diagonal as a map $\diag{f} : (X,f) \to (X \times_\GG X, f \circ \pi_1)$ in $\CC/\GG$. Note that we have an
  $R_1$-map structure on $f \circ \pi_1 : X \times_\GG X \to \GG$: it is the composite of
  $f : X \twoheadrightarrow_1 Y$ and $\pi_1 : X \times_\GG X \to X$, the latter of which is the pullback of an
  $R_1$-map and therefore an $R_1$-map. Thus $L_\GG(f)$ and $R_\GG(f)$ are $L_1$- and $R_1$-maps by
  \cref{thm:stable-factorization}, so we may take $r_f \eqdef L_\GG(f)$ and $p_f \eqdef R_\GG(f)$.
  Functoriality of this choice follows from functoriality of \cref{thm:stable-factorization}, while stability
  is immediate from the fact that the awfs is fibred.
\end{proof}

\section{Examples}
\label{sec:examples}

We now demonstrate the applicability of our construction. First, we bring our set of axioms closer to the
common examples by considering the situation where the fibred awfs in question constitutes the trivial
cofibrations and fibrations of a model structure. In this case we can recast \cref{axioms:awfs} in a more
natural way. Actually, we will not need a full model structure to do so; we can get by with a \emph{pre-ams} à
la \citet[Definition 2.8]{swan18a}.

\begin{definition}[{\thmcite[Definition 4.4.6]{swan18b}}]
  A fibred awfs $\pair{L_\GG}{R_\GG}_{\GG \in \CC}$ is \emph{strongly fibred} when $\pair{L_\GG}{R_\GG}$
  preserves pullbacks for each $\GG \in \CC$: if $(h,k) : f' \to f$ is a cartesian square, then so is
  $R_\GG(h,k) : R_\GG(f') \to R_\GG(f)$.
\end{definition}

\begin{proposition}[{\thmcite[Proposition 16]{cavallo19c}}]
  \label{prop:strongly-fibred-pullback}
  In a strongly fibred awfs $\pair{L_\GG}{R_\GG}_{\GG \in \CC}$, the $L_\GG$-maps are (functorially) closed
  under pullback for every $\GG \in \CC$.
\end{proposition}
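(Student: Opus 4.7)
The plan is to leverage the strongly-fibred hypothesis, which tells us that $R_\GG$ preserves cartesian squares, to lift $L_\GG$-coalgebra structures along pullback squares. Given a cartesian square $(h,k) : m' \to m$ in $(\CC/\GG)^\to$ with $m : A \to B$ and $m' : A' \to B'$, together with an $L_\GG$-map structure $s : B \to K_\GG(m)$ on $m$, I aim to construct a canonical $L_\GG$-map structure $s' : B' \to K_\GG(m')$ on $m'$.

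The construction goes as follows. By the strongly fibred hypothesis, the square $R_\GG(h,k) : R_\GG(m') \to R_\GG(m)$ is cartesian, so $K_\GG(m')$ is the pullback of $R_\GG(m) : K_\GG(m) \to B$ along $k : B' \to B$. I define $s'$ by invoking this universal property on the cone consisting of $sk : B' \to K_\GG(m)$ and $\id_{B'} : B' \to B'$; these are compatible because $R_\GG(m) \circ s = \id_B$ forces $R_\GG(m) \circ s \circ k = k \circ \id_{B'}$. This yields a unique $s'$ satisfying $K_\GG(h,k) \circ s' = s \circ k$ and $R_\GG(m') \circ s' = \id_{B'}$, so the lower triangle of the filler condition is immediate.

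For the upper triangle $s' \circ m' = L_\GG(m')$, I use the universal property of $K_\GG(m')$ once more: it suffices to check equality after post-composition with $K_\GG(h,k)$ and with $R_\GG(m')$. The second is trivial since both composites equal $m'$. The first unfolds as $K_\GG(h,k) \circ s' \circ m' = s \circ k \circ m' = s \circ m \circ h = L_\GG(m) \circ h = K_\GG(h,k) \circ L_\GG(m')$, where the last step uses naturality of $L_\GG$ applied to the square $(h,k)$.

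Functoriality follows because $s'$ is uniquely determined by the universal property from $s$ and $(h,k)$, so the assignment $((h,k),s) \mapsto s'$ automatically respects composition of cartesian squares. The main conceptual step is recognising that the strongly fibred condition gives exactly the pullback of $K_\GG(m)$ needed to define $s'$ by a cone; I do not anticipate a real obstacle, as everything after that follows mechanically from the universal property and naturality of $L_\GG$.
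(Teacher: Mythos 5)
Your proof is correct. The paper does not include its own argument for this proposition --- it simply cites it from \cite{cavallo19c} --- but the route you take is the natural one and is, as far as I can tell, the expected argument: use the strongly fibred hypothesis to turn $R_\GG(h,k)$ into a pullback square, use the universal property of $K_\GG(m')$ to produce the coalgebra structure $s'$ from $s$, and verify the two triangle identities via the pullback uniqueness together with the copointed naturality $K_\GG(h,k) \circ L_\GG(m') = L_\GG(m) \circ h$. You also correctly observe that the equation $K_\GG(h,k) \circ s' = s \circ k$, which comes for free from the defining cone, is precisely the condition making $(h,k)$ a morphism $(m',s') \to (m,s)$ in $\Map{L_\GG}$, and that uniqueness of $s'$ gives functoriality. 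No gaps.
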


\begin{definition}[{\thmcite[Definition 2.8]{swan18a}}]
  A \emph{pre algebraic model structure (pre-ams)} on a category $\CC$ is a pair of awfs's $\pair{C^t}{F}$ and
  $\pair{C}{F^t}$ on $\CC$ together with a morphism of awfs's $\xi : \pair{C^t}{F} \to \pair{C}{F^t}$.
\end{definition}

We refer to Riehl \cite[Definition 2.14]{riehl11} for a definition of \emph{morphism of awfs's}; we will only
need the following property.

\begin{proposition}
  \label{prop:pre-ams-right}
  A pre-ams $\xi : \pair{C^t}{F} \to \pair{C}{F^t}$ induces a functor $\Map{F^t} \to \Map{F}$.
\end{proposition}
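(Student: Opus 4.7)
The plan is to unpack the definition of morphism of awfs's and then pull back algebra structures along the resulting comparison map. Specifically, a morphism of awfs's $\xi : \pair{C^t}{F} \to \pair{C}{F^t}$ is determined by a natural transformation whose component at a map $f : X \to Y$ is a map $\xi_f : K^t(f) \to K(f)$ on middle objects, where $K^t$ is the middle-object functor for $\pair{C^t}{F}$ and $K$ for $\pair{C}{F^t}$, satisfying
\[
  \xi_f \circ C^t(f) = C(f) \qquad \text{and} \qquad F^t(f) \circ \xi_f = F(f).
\]
The two sides of these equations are the statements that the two triangles of the square $(\id_X, \id_Y) : f \to f$ in the comparison $K^t(f) \to K(f)$ commute. (Equivalently, $\xi$ gives a monad morphism $F \Rightarrow F^t$ between the associated pointed endofunctors on $\CC^\to$.)

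Now I define the functor $\Map{F^t} \to \Map{F}$ on objects by sending an $F^t$-algebra $(f, p')$, where $p' : K(f) \to X$, to $(f, p' \circ \xi_f)$ with $p' \circ \xi_f : K^t(f) \to X$. The two pointed endofunctor algebra axioms for $(f, p' \circ \xi_f)$ follow at once: $(p' \circ \xi_f) \circ C^t(f) = p' \circ C(f) = \id_X$ and $f \circ (p' \circ \xi_f) = F^t(f) \circ \xi_f = F(f)$, using in each case one identity of $F^t$-algebra and one equation from the previous paragraph.

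On morphisms, a square $(h, k) : (f', p'_1) \to (f, p'_2)$ in $\Map{F^t}$ is sent to the same square regarded as a morphism $(f', p'_1 \circ \xi_{f'}) \to (f, p'_2 \circ \xi_f)$ in $\Map{F}$; the required equation $(p'_2 \circ \xi_f) \circ K^t(h, k) = h \circ (p'_1 \circ \xi_{f'})$ follows from naturality of $\xi$ (giving $\xi_f \circ K^t(h,k) = K(h,k) \circ \xi_{f'}$) combined with the $\Map{F^t}$-morphism equation $p'_2 \circ K(h,k) = h \circ p'_1$. Functoriality with respect to identities and composition is automatic. There is no substantive obstacle here; the proof is a direct unpacking of definitions, with the only thing to remember being that the monad morphism supplied by $\xi$ runs in the direction $F \Rightarrow F^t$, so that pullback of algebras produces a functor in the opposite direction as stated.
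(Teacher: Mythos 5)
Correct, and essentially the only reasonable approach: the paper gives no proof, pointing instead to Riehl's Definition 2.14, and your argument is the standard one of precomposing algebra structures with the comparison transformation $\xi$. Note that you use only the compatibility of $\xi$ with the underlying functorial factorizations (your two displayed identities plus naturality), not the full compatibility with the (co)multiplications required in Riehl's definition of awfs morphism; this is appropriate here, since $\Map{F^t}$ and $\Map{F}$ are categories of \emph{pointed endofunctor} algebras rather than Eilenberg--Moore algebras, so the unit-level compatibility is all that is needed.
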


\begin{axioms}
  \label{axioms:ams}
  Fix a category $\CC$ with finite limits and a fibred awfs $\pair{C_\GG^t}{F_\GG}_{\GG \in \CC}$. We
  require the following.
  \begin{axenum}
  \item \label{axiom:ams:cofib} A strongly fibred awfs $\pair{C_\GG}{F_\GG^t}_{\GG \in \CC}$.
  \item \label{axiom:ams:ams} A pre-ams $\xi : \pair{C^t_1}{F_1} \to \pair{C_1}{F^t_1}$.
   
  \item \label{axiom:ams:cocyl} A functorial cocylinder $(\Cocyl{(-)},\face0{(-)},\face1{(-)},\degen{(-)})$.
  \item \label{axiom:ams:lface} A functor $\Map{F_\GG} \to \Map{F^t_\GG}$ making $\lface[\GG]0{f}$ an
    $F^t_\GG$-map for each $F_\GG$-map $f$.
  \item \label{axiom:ams:lbdy} A functor $\Map{F_\GG} \to \Map{F_\GG}$ making $\lbdy[\GG]{f}$ an $F_\GG$-map for each $F_\GG$-map $f$.
  \end{axenum}
\end{axioms}

\begin{lemma}
  Any $\CC$ and $\pair{C^t_\GG}{F_\GG}_{\GG \in \CC}$ satisfying \cref{axioms:ams} also satisfies
  \cref{axioms:awfs}.
\end{lemma}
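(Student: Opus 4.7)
Identify $\pair{L_\GG}{R_\GG}$ in \cref{axioms:awfs} with $\pair{C^t_\GG}{F_\GG}$. The required functorial cocylinder is provided directly by \cref{axiom:ams:cocyl}, and \cref{axiom:awfs:lbdy} is exactly \cref{axiom:ams:lbdy}. The only non-trivial content is thus \cref{axiom:awfs:lface}: constructing a functor $\Map{F_\GG} \to \Map{F_1}$ that equips $\lface[\GG]0{f}$ with an $F_1$-map structure for each $F_\GG$-map $f$.

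The plan is to factor this functor as
\[
  \Map{F_\GG} \xrightarrow{\;(*_1)\;} \Map{F^t_\GG} \xrightarrow{\;(*_2)\;} \Map{F^t_1} \xrightarrow{\;(*_3)\;} \Map{F_1},
\]
where $(*_1)$ is \cref{axiom:ams:lface}, $(*_3)$ is \cref{prop:pre-ams-right} applied to the pre-ams of \cref{axiom:ams:ams}, and $(*_2)$ shows that each $F^t_\GG$-map in $\CC/\GG$ carries a canonical $F^t_1$-map structure when regarded as a map in $\CC$. Note that the slice structure on $\lface[\GG]0{f}$ comes from the canonical maps out of $\Cocyl[\GG]{X}$ and $(\Cocyl[\GG]{Y}) \times_Y X$ to $\GG$, so once $(*_2)$ is in hand it applies to the output of $(*_1)$.

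To construct $(*_2)$, fix $g : (X,x) \to (Y,y)$ an $F^t_\GG$-map and a lifting square $(h,k) : m \to g$ in $\CC^\to$ with $m : A \to B$ a $C_1$-map. Equip $B$ with the slice structure $yk : B \to \GG$; since $yg = x$ and $gh = km$ give $xh = (yk)m$, both $h$ and $k$ become morphisms in $\CC/\GG$, and $m$ is itself a map $(A,(yk)m) \to (B,yk)$ over $\GG$. Now $m$ so regarded is the pullback, along $\pair{yk}{\id_B}$, of the reindexing $\GG \times m : (\GG \times A,\pi_1) \to (\GG \times B,\pi_1)$, which is a $C_\GG$-map because the awfs $\pair{C_\GG}{F^t_\GG}$ is (strongly) fibred (\cref{axiom:ams:cofib}); by \cref{prop:strongly-fibred-pullback}, the pullback $m$ then inherits a $C_\GG$-map structure. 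The $F^t_\GG$-structure on $g$ produces a diagonal filler for $(h,k)$ in $\CC/\GG$, which is the desired filler in $\CC$.

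Functoriality of the composite follows from that of each ingredient---$(*_1)$ and $(*_3)$ are functorial by hypothesis, while $(*_2)$ is natural in $(h,k)$ by functoriality of \cref{prop:strongly-fibred-pullback} and of the $F^t_\GG$-lift on $g$. The main pitfall to watch for is bookkeeping: the construction interleaves the two awfs's $\pair{C^t_\GG}{F_\GG}$ and $\pair{C_\GG}{F^t_\GG}$, and it is crucial that $(*_2)$ leverages the \emph{strongly} fibred structure of the second of these, which is not assumed for the first.
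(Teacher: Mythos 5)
Your proof is correct and takes essentially the same approach as the paper's: both reduce to showing that $\lface[\GG]0{f}$ lifts against $C_1$-maps by upgrading an arbitrary $C_1$-map over $\GG$ to a $C_\GG$-map via reindexing along $!_\GG$ followed by pullback along $\pair{b}{\id}$, using \cref{axiom:ams:cofib} and \cref{prop:strongly-fibred-pullback}, then applying \cref{axiom:ams:lface} and finally \cref{prop:pre-ams-right}. The only difference is presentational: you factor the argument explicitly through $\Map{F^t_\GG}$, isolating as a standalone step $(*_2)$ the fact that $F^t_\GG$-map structure yields $F^t_1$-map structure --- which is precisely the "trivial fibration structure is fiberwise structure" condition the paper gestures at in the remark following \cref{axioms:awfs} but leaves inlined in its proof.
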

\begin{proof}
  Only \cref{axiom:awfs:lface} is not immediate. Suppose we are given an $F_\GG$-map
  $f : (X,x) \twoheadrightarrow_\GG (Y,y)$; we need an $F_1$-map structure on $\lface[\GG]0{f}$. Thanks to
  \cref{axiom:ams:ams} and \cref{prop:pre-ams-right}, it suffices to obtain an $F^t_1$-map structure. We
  therefore show that $\lface[\GG]0{f}$ lifts against $C_1$-maps; let some $C_1$-map $m : A \to B$ be given
  with a lifting problem $(h,k) : m \to \lface[\GG]0{f}$ in $\CC$. By composing the legs of $\lface[\GG]0{f}$
  with $h$ and $k$, we obtain $a,b$ such that $m$ is a map $(A,a) \to (B,b)$ over $\GG$. By reindexing $m$
  along $!_\GG : \GG \to 1$, we obtain an $C_\GG$-map structure on
  $\GG \times m : (\GG \times A, \pi_1) \to (\GG \times B, \pi_1)$. By \cref{axiom:ams:cofib} and
  \cref{prop:strongly-fibred-pullback}, we can pull back $\GG \times m$ back along
  $\pair{b}{\id} : (B,b) \to (\GG \times B,\pi_1)$ to obtain an $C_\GG$-map structure on
  $m : (A,a) \to (B,b)$. Thus $m$ lifts against $\lface[\GG]0{f}$ in $\CC/\GG$ by \cref{axiom:ams:lface},
  which in particular gives a filler for $(h,k)$.
\end{proof}

We now shift to a relatively concrete situation, applying the framework developed in
\cite{cavallo19c,cavallo20} to obtain stable functorial diagonal factorizations in a variety of cubical
models.

\begin{axioms}
  \label{axioms:cms}
  Fix a locally cartesian closed category $\CC$ with finite colimits and disjoint coproducts. We require the
  following.
  \begin{axenum}
  \item A monomorphism $\top : \Phi_{\mathsf{true}} \to \Phi$ such that pullbacks of $\top$ include $!_0 : 0 \to 1$ and $!_1 : 1 \to 1$ and are closed under binary union.
  \item An interval object $\delta_0, \delta_1 : 1 \to \II$ such that $\delta_0$ and $\delta_1$ are pullbacks
    of $\top$.
  \item One of the following two conditions:
    \begin{axenum}
    \item \label{axiom:cms:decidable} $\CC$ is an (internal) category of presheaves and $\top$ is a locally decidable monomorphism.
    \item \label{axiom:cms:wisc} $\CC$ is a $\Pi W$-pretopos and it satisfies the axiom weakly initial set of covers (WISC).
    \end{axenum}
  \end{axenum}
\end{axioms}

\begin{remark}
  An interval $\II$ induces interval objects $\II_\GG \eqdef \pi_2 : \II \times \GG \to \GG$ in each slice
  category. For any map $f : A \to B$, we define the \emph{mapping cylinder} $\mathrm{Cyl}(f)$ to be the
  pushout of $B \overset{f}\longleftarrow A \overset{\delta_0 \times A}{\longrightarrow} \II \times A$; we
  have a map
  $d(f) : A \overset{\delta_1 \times A}{\longrightarrow} \II \times A \overset{\iota_2}{\longrightarrow}
  \mathrm{Cyl}(f)$.
\end{remark}

\begin{map}[{\thmcite[\S1.4]{cavallo19c}}]
  \label{map:cms-generator}
  Let $\CC$ satisfying \cref{axioms:cms} be given.  Write $\Delta$ for the map
  $1_{\Phi \times \II} \to \II_{\Phi \times \II}$ in $\CC/\Phi \times \II$ given by
  $\pair{\pi_2}{\id} : \Phi \times \II \to \II \times (\Phi \times \II)$.  Write $\mathbb{T}$ for the object
  of $\CC/\Phi \times \II$ given by $\top \times \II : \Phi_{\mathsf{true}} \times \II \to \Phi \times
  \II$. Write $T$ for the map $\mathbb{T} \to 1_{\Phi \times \II}$. Define the following map in
  $\CC/\Phi \times \II$, where $+$, $\times$, $\hat\times$, and $\mathrm{Cyl}$ are all computed in the slice
  category.
  \[
    \begin{tikzcd}
      1 +_{\mathbb{T}} (\mathrm{Cyl}(\Delta) \times \mathbb{T})
      \ar[rr, "d(\Delta) \hat\times T"]
      \ar[dr]
      & & \mathrm{Cyl}(\Delta) \ar[dl] \\
      & \Phi \times \II &
    \end{tikzcd}
  \]
\end{map}

\begin{theorem}
  \label{thm:cms-id}
  Assuming \cref{axioms:cms}, the fibred awfs generated (in the sense of \cite{swan18b}) by
  \cref{map:cms-generator} exists and supports a stable functorial choice of diagonal factorizations.
\end{theorem}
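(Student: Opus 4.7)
The plan is to reduce to the corollary of \cref{thm:stable-factorization} by verifying \cref{axioms:ams} for the data at hand; much of the underlying machinery has already been developed in \cite{cavallo19c,cavallo20}, so my task is largely to assemble those results.

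First, I would establish existence of the fibred awfs $\pair{C^t_\GG}{F_\GG}_{\GG \in \CC}$ generated by \cref{map:cms-generator} using Swan's fibred small object argument \cite{swan18b,swan18d}. Its hypotheses are met in both branches of the axioms---the presheaf setting with locally decidable $\top$ of \cref{axiom:cms:decidable}, and the $\Pi W$-pretopos with WISC setting of \cref{axiom:cms:wisc}---and in each case the generated awfs is fibred.

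Next, I would supply the auxiliary data for \cref{axioms:ams}. For \cref{axiom:ams:cocyl}, set $\Cocyl{(-)} \eqdef (-)^\II$, which exists by local cartesian closure, with faces induced by $\delta_0,\delta_1$ and degeneracy by the unique map $\II \to 1$. For \cref{axiom:ams:cofib}, take the cofibrations $C_\GG$ in each slice to be fibred pullbacks of $\top$; the generated awfs $\pair{C_\GG}{F^t_\GG}$ is strongly fibred essentially by construction, since the class $C_\GG$ is defined to be pullback-stable. For the pre-ams structure \cref{axiom:ams:ams}, I need to exhibit a morphism of awfs's $\pair{C^t_1}{F_1} \to \pair{C_1}{F^t_1}$; this reduces to showing that the generator of \cref{map:cms-generator} is a $C_1$-map, which holds since it is assembled from pushouts, pullbacks, and Leibniz products of pullbacks of $\top$ (namely $T$ together with the endpoints of $\II$), and cofibrations are closed under these operations by the closure hypotheses.

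The main obstacles are \cref{axiom:ams:lface,axiom:ams:lbdy}, the Leibniz conditions relating fibrations with the cocylinder. \Cref{axiom:ams:lbdy} asks that Leibniz exponential by the boundary $\partial : 1 + 1 \to \II$ preserve fibrations in each slice, and \cref{axiom:ams:lface} asks that Leibniz exponential by a single endpoint $\delta_0$ take fibrations to \emph{trivial} fibrations. Both properties are built into the shape of the generator: the mapping cylinder around $\Delta$, Leibniz-multiplied with $T$, encodes precisely the Frobenius-like filling data needed, uniformly in each slice. The technical verification is the content of the corresponding results in \cite{cavallo19c,cavallo20}---which is why we can cite that framework rather than redoing the small object argument. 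Once these axioms are in place, the corollary of \cref{thm:stable-factorization} delivers the stable functorial choice of diagonal factorizations.
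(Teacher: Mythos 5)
Your proposal takes essentially the same route as the paper: both verify \cref{axioms:ams} and then invoke (the corollary of) \cref{thm:stable-factorization}, delegating the existence of the fibred awfs, the strong-fibration of the cofibration awfs, and \cref{axiom:ams:lface,axiom:ams:lbdy} to results in \cite{cavallo19c,cavallo20,swan18b,swan18d}. One small caution: you claim the pre-ams morphism ``reduces to showing that the generator is a $C_1$-map,'' and that the strong fibration of $\pair{C_\GG}{F^t_\GG}$ holds ``essentially by construction''---the paper is more careful here, noting that producing an actual morphism of awfs's requires functorializing the class-level inclusion $C^t \subseteq C$ (citing \cite[\S5.4.1]{swan18a} for an example of such an argument) and citing \cite[Corollary 7.5.5]{swan18b} specifically for the strong fibration, so these steps deserve slightly more than a shrug even when leaning on prior work.
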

\begin{proof}
  Per \cite[Theorem 13]{cavallo19c}, the existence of the fibred awfs follows from \cite[Theorem
  6.14]{swan18d} if \cref{axiom:cms:decidable} holds and \cite[Corollary 6.12]{swan18d} if
  \cref{axiom:cms:wisc} holds. For the same reasons, we have a second fibred awfs generated by $\top$ viewed
  as a map into the terminal object of $\CC/\Phi$; by \cite[Corollary 7.5.5]{swan18b}, this awfs is strongly
  fibred.  Constructing the comparison map is a matter of functorializing the proof in \cite[Theorem
  35]{cavallo19c} that $C^t \subseteq C$ as classes of maps; we leave this to the reader, referring to
  \cite[\S5.4.1]{swan18a} for an example of such an argument. Finally, \cref{axiom:ams:lface,axiom:ams:lbdy}
  are fulfilled by \cite[Lemma 32]{cavallo19c}.
\end{proof}

As detailed in \cite{cavallo20}, \cref{thm:cms-id} can be instantiated to give models of identity types in De
Morgan cubical sets \cite{cchm} and cartesian cubical sets \cite{angiuli18,abcfhl} among other
variations. (Substructural cubical sets as developed by \citet{bch} are, however, \emph{not} an instance of
this construction.)

We can also apply \cref{thm:cms-id} to simplicial sets. If we take $\top$ to be the map
$\mathsf{true} : 1 \to \Omega$ into the subobject classifier, then the class of fibrations generated by
\cref{map:cms-generator} is the same as that of the classical model structure on simplicial sets: it coincides
with the definition of fibration used in \cite{cchm,orton18} by \cite[\S2.3.2]{cavallo20}, and this agrees
with the classical definition by \cite[Chapter IV, \S2]{gabriel67} (as observed in \cite[Corollary
8.4]{sattler17}).

\printbibliography

\end{document}